\newtheorem{thm}{Theorem}
\newtheorem{prop}{Proposition}
\newtheorem{lem}{Lemma}
\newtheorem{rem}{Remark}
\newcommand{\rn}[1]{\mathbb{R}^{#1}}
\newcommand{\gotico}{\mathfrak}
\newcommand{\Ad}{\mbox{Ad}}
\def\bq{\begin{equation}}
\def\eq{\end{equation}}
\title[Global behavior of the Ricci flow on homogeneous manifolds]{Global behavior of the Ricci flow on homogeneous manifolds with two isotropy summands.  }
\thanks{This research was partially supported by FAPESP grant 2007/05215-4 (Martins) and by CNPq grant 140431/2009-8 (Grama).}
\author{Lino Grama}
\email[L. Grama]{linograma@gmail.com}
\author[Ricardo M. Martins]{Ricardo Miranda Martins}
\email[R.M. Martins]{rmiranda@ime.unicamp.br}
\address{Department of Mathematics, Institute of Mathematics, Statistics and Scientific Computing. P.O. Box 6065, University of Campinas - UNICAMP.}
\begin{document}

\begin{abstract}
In this paper we study the global behavior of the Ricci flow equation for two classes of homogeneous manifolds with two isotropy summands. Using methods of the qualitative theory of differential equations, we present the global phase portrait of such systems and derive some geometrical consequences on the structure of such manifolds under the action of the Ricci flow.
\end{abstract}
 \subjclass[2000]{57M50 \and 57S25 \and 37C10 \and 37M99.}
\keywords{Ricci flow, generalized flag manifolds, Poincar\'e compactification}

\maketitle

\section{Introduction}
\label{intro}

The Ricci flow of left-invariant metrics on homogeneous has been investigated intensively in the last years, and many papers have been published about these systems, for instance \cite{lauret} and \cite{ds3}. The key point for investigating homogeneous manifolds is that they provide examples of interesting phenomena, see for example \cite{helga} and \cite{kob}.

The Ricci flow equation was introduced by Hamilton in 1982 (see \cite{hamilton}), and is defined by
\begin{equation}
\label{eqnricci}
\frac{\partial g(t)}{\partial t}=-2Ric(g(t)),
\end{equation}
where $Ric(g)$ is the Ricci tensor of the Riemannian metric $g$. The solution of this equation, the so called Ricci flow, is a $1$-parameter family of metrics $g(t)$ in $M$. Intuitively, this is the heat equation for the metric $g$.

The Ricci flow equation (\ref{eqnricci}) for a arbitrary manifold is a nonlinear system of PDEs. However, when restricted to the set of invariant metrics, system (\ref{eqnricci}) reduces to a autonomous nonlinear system of ODEs.

Because of this, it is natural to proceed the study of the Ricci flow from a qualitative point of view, using tools from the theory of dynamical systems. Some recent papers that use this approach are \cite{ds1}, \cite{novosd1}, \cite{ds3} and \cite{ds4}.

The main aim of this paper is to study the equations of the Ricci flow of left-invariant metrics in homogeneous manifolds with two isotropy summands. { A complete list of such manifolds is given in  \cite{arva3}, but we restrict ourselves to two special classes. Specifically, we are interested in the equation (\ref{eqnricci}) restricted to the set of the left-invariant metrics in the generalized flag manifolds} $\displaystyle \frac{SO\left(2n+1\right)}{U(m)\times SO(2k+1)}$ and $\displaystyle \frac{Sp(n)}{U(m)\times Sp(k)}$ (referred sometimes as manifolds of type I and type II, respectively). This study can be carried over to the other classes of generalized flag manifolds with two isotropy summands as well, using the same techniques we develop here, and the results obtained are very similar.

In the cases we want to study, the Ricci flow equation (\ref{eqnricci}) is equivalent to a polynomial differential system. We can study the behavior of such systems at infinity using a method introduced by Poincar\'e, the so called Poincar\'e Compactification. This method allow us to study global phase portraits for polynomial systems.

This paper is organized as follows. In Section 2 we provide a geometrical description of the homogeneous manifolds of types I and II. In Section 3, we describe the Poincar\'e compactification method. In Section 4 we study the dynamics of the Ricci flow equation at infinity for manifolds of type I and II, and in Section 5 we provide a geometrical interpretation for the results obtained in the previous section.

\section{Geometric description of the homogeneous manifold $SO\left(2n+1\right)/\left(U(m)\times SO(2k+1)\right)$ and $Sp(n)/\left(U(m)\times Sp(k)\right)$}
Let $G$ be a compact, connect and semisimple Lie group and denote by $\gotico{g}$ the Lie algebra of $G$. Let $H$ be a closed connected subgroup of $G$, with Lie algebra $\gotico{h}$, and consider the homogeneous space $G/H$. The point $o=eH$ is called the origin of the homogeneous space. 

Since $G$ is compact and semisimple, the Cartan-Killing form of $\gotico{g}$ is non-degenerate and negative definite and we will denote by $Q$ the negative of the Cartan-Killing form. In this case, we say that the homogeneous space $G/H$ is reductive, that is, $$
\gotico{g}=\gotico{h}\oplus \gotico{m} \mbox{\,\,\, and \,\,\,} \Ad(H)\gotico{m}\subset \gotico{m},$$ where $\gotico{m}=\gotico{h}^{\perp}$.  

We naturally identify the tangent space at origin $o$ with $\gotico{m}$. We define the isotropy representation $$j:H \rightarrow \mbox{GL}(\gotico{m})$$ given by $j(h)=\Ad (h)\big|_{\gotico{m}}$ for $h\in H$; in this way, $\gotico{m}$ is a $\Ad(H)$-module. 

A Riemannian metric in $G/H$ is left-invariant if the diffeomorphism $L_a:G/H \rightarrow G/H$ given by $L_a(gH)=agH$ is an isometry for all $a\in G$. A left-invariant metric is completely determined by its value at the origin $o$. 

If ${G}/{H}$ is reductive with an $\Ad(H)$-invariant decomposition $\gotico{g}=\gotico{h}\oplus\gotico{m}$, then there is a natural one-to-one correspondence between the $G$-invariant Riemannian metrics $g$ on ${G}/{H}$ and the $\Ad(H)$-invariant scalar product $B$ on $\gotico{m}$, see \cite{kob}.

An important class of homogeneous spaces are the {generalized flag manifolds (or K\"ahler C-space), which are orbits of an element $X\in \gotico{g}$ by the adjoint action $Ad:G\rightarrow\gotico{g}$ from a compact semisimple Lie group $G$ to $\gotico{g}$ . A classification of the generalized flag manifolds is given in \cite{arva1}.

In this work, we consider the generalized flag manifolds of types I and II, that is, \[\frac{SO\left(2n+1\right)}{U(m)\times SO(2k+1)} \mbox{\,\,\,and\,\,\,} \frac{Sp(n)}{U(m)\times Sp(k)}.\]

Fix a reductive decomposition of the Lie algebra $\gotico{g}$, for $\gotico{g}=\gotico{so}\left(2n+1\right)$ or $\gotico{g}=\gotico{sp}(n)$. In these manifolds, the tangent space at origin $o$ splits into two irreducible non-equivalent $\Ad(H)$-submodules. For a complete list of the generalized flag manifolds with this property, see \cite{arva3}.

Let $g$ be an invariant metric and $B$ the $\Ad$-invariant scalar product on $\gotico{m}$ corresponding to $g$. Then $B$ is given by $B(X,Y)=Q(\Lambda X,Y)$, where the linear operator $\Lambda:\gotico{m}\rightarrow\gotico{m}$ is symmetric and positive with respect to the Cartan-Killing form of $\gotico{g}$. We will denote such an invariant metric $g$ by $\Lambda$ . 

Let $\gotico{m}=\gotico{m}_1 \oplus \gotico{m}_2$ be a decomposition of $\gotico{m}$ into irreducible non-equivalent $\Ad(H)$-submodules. A consequence of the Schur's lemma is that \mbox{$\Lambda\big|_{\gotico{m}_i}=\lambda_i\cdot{Id}\big|_{\gotico{m}_i}$} for $i=1,2$ and therefore any invariant scalar product has the form \[B(X,Y)=\lambda_{1} \cdot Q(X,Y)\big|_{\gotico{m}_{1}}\oplus \lambda_{2} \cdot Q(X,Y)\big|_{\gotico{m}_{2}}.\] For more details about the decomposition of the isotropy representation, see \cite{ale1} and \cite{arva1}.

The Ricci tensor of the an $G-$invariant metric is also an $G-$invariant tensor and is completely determined by its value at the origin $o$. 

\begin{prop}
The components of the Ricci tensor of the manifolds of type I and II 
are given, respectively, by
\bq\label{77tf}\
\left\{
\begin{array}{rcl}
\vspace{0.5cm}\displaystyle r_1&=&\displaystyle-\frac{2(m-1)}{2n-1}-\frac{1+2k}{2(2n-1)}\frac{\lambda_1^2}{\lambda_2^2},\\
\displaystyle r_2&=&\displaystyle -\frac{n+k}{2n-1}-\frac{m-1}{2(2n-1)}\frac{\left(\lambda_2^2-(\lambda_1-\lambda_2)^2\right)}{\lambda_1\lambda_2},
\end{array}
\right.
\eq
and 
\bq\label{77tfu}
\left\{
\begin{array}{rcl}
\vspace{0.5cm}\displaystyle r_1&=&\displaystyle -\frac{2+2m}{2n+2}-\left(\frac{2k}{4n+4}\right)\frac{\lambda_1^2}{\lambda_2^2},\\
\displaystyle r_2&=&\displaystyle -\frac{4m+4k+3}{4n+4}+\left(\frac{4m+2}{16n+16}\right)\frac{ \lambda_1}{ \lambda_2}.
\end{array}
\right.
\eq
\end{prop}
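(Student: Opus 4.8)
The plan is to specialize, to the two families in question, the standard formula expressing the Ricci tensor of an invariant metric on a reductive homogeneous space in terms of the dimensions of the isotropy summands and the structure constants of the bracket. Fix $Q$-orthonormal bases $\{e^{(i)}_\alpha\}$ of $\mathfrak{m}_i$, set $d_i=\dim\mathfrak{m}_i$, and put
\[
\left[ijk\right]=\sum_{\alpha,\beta,\gamma}Q\bigl([e^{(i)}_\alpha,e^{(j)}_\beta],e^{(k)}_\gamma\bigr)^2 ,
\]
which is symmetric in $i,j,k$. For a metric $g=\lambda_1 Q|_{\mathfrak{m}_1}\oplus\lambda_2 Q|_{\mathfrak{m}_2}$ the Ricci tensor is again block-diagonal, $Ric(g)=r_1 Q|_{\mathfrak{m}_1}\oplus r_2 Q|_{\mathfrak{m}_2}$, and each $r_i$ is a rational function of $\lambda_1,\lambda_2$ whose coefficients are built from $d_1,d_2$, the symbols $[ijk]$, and the normalization of $Q$ (the formula of Wang and Ziller; see, e.g., \cite{arva1}). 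I would start by writing this formula out explicitly in the case of two summands, where it is short.

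The second step is to assemble the combinatorial data. From the description of Section~2 (with $n=m+k$), the complement $\mathfrak{m}$ splits, for type~I, into the summand sitting inside $\mathfrak{so}(2m)/\mathfrak{u}(m)$, of dimension $m(m-1)$, and the summand isomorphic to $\mathbb{C}^{m}\otimes_{\mathbb{R}}\mathbb{R}^{2k+1}$, of dimension $2m(2k+1)$; for type~II one obtains $\mathfrak{sp}(m)/\mathfrak{u}(m)$, of dimension $m(m+1)$, and the off-block summand, of dimension $4mk$. This pins down $d_1$ and $d_2$. Both spaces arise as gradings of $\mathfrak{g}$ by the one-dimensional centre of $\mathfrak{h}$, with summands in degrees $\pm1$ and $\pm2$; since $SO(2m)/U(m)$ and $Sp(m)/U(m)$ are symmetric, the induced bracket relations ($[\mathfrak{m}_a,\mathfrak{m}_a]\subseteq\mathfrak{h}$ for the degree-$2$ summand, and $[\mathfrak{m}_1,\mathfrak{m}_2]$ inside the degree-$1$ summand) force all the symbols $[ijk]$ to vanish except one, so only a single structure constant has to be found.

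The third step, which I expect to be the real work, is to compute that constant. For this the most convenient tool is the root-space picture of $\mathfrak{g}=\mathfrak{so}(2n+1)$ (resp.\ $\mathfrak{sp}(n)$) relative to a maximal torus contained in $\mathfrak{h}$: each $\mathfrak{m}_i$ is spanned by the real forms of the root vectors for the roots of a prescribed degree, and the surviving symbol becomes an explicit sum of squares of root-system structure constants, which one evaluates either directly or through the Casimir eigenvalues of the two isotropy summands. One must keep careful track of the normalization $Q=-B$: since the Cartan--Killing form is $B=(2n-1)\,\mathrm{tr}$ for $\mathfrak{so}(2n+1)$ and $B=(2n+2)\,\mathrm{tr}$ for $\mathfrak{sp}(n)$, it is cleanest to carry out the bracket computations with the matrix trace form and rescale only at the end, which is precisely where the denominators $2n-1$ in (\ref{77tf}) and $2n+2$ in (\ref{77tfu}) come from. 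Substituting $d_1$, $d_2$ and the structure constant into the two-summand formula and simplifying the resulting rational expression in $\lambda_1,\lambda_2$ then yields (\ref{77tf}) and (\ref{77tfu}) (up to the overall normalization adopted here for $Ric$). Alternatively, since these flag manifolds and their isotropy data are recorded in \cite{arva3} (and Ricci tensors of flag manifolds with two isotropy summands are treated in \cite{arva1}), one may simply import the needed constants from there and be left with the final substitution.
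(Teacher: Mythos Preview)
Your proposal is correct and follows essentially the same approach as the paper: apply the general formula for the Ricci tensor of an invariant metric on a reductive homogeneous space (which is Proposition~3 of \cite{arva2}, the reference the paper invokes) and specialize to the two families using the dimensions and structure constants of the isotropy summands. The paper's own proof is just the one-line citation ``a direct computation using Proposition~3 of \cite{arva2}; for more details see \cite{neiton}'', so your outline in fact supplies considerably more detail than the paper does, and your final remark about importing the constants from \cite{arva3} is exactly the shortcut the authors take.
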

\begin{proof}
A direct computation using Proposition 3 of \cite{arva2}. For more details, see \cite{neiton}.
\end{proof}

The Ricci flow equation for the left-invariant metrics is given by \bq\label{r4originalLINO}\dot \lambda_{i}=-2r_{i}, \ i=1,2.\eq

\section{Poincar\'e Compactification}

The main idea of the Poincar\'e Compactification is to pass from the study of a vector field in a noncompact manifold to the study of a vector field in a compact one, the sphere. This allow us to better understand its behavior in the infinity. This method dates back to 1881, with Poincar\'e. He was studying the behavior of polynomial planar vector fields at infinity by means of the central projection. We recommend \cite{velasco} for the detailed description of this method, including a $n$-dimensional version.

Consider the polynomial differential system \bq\label{ppp1}\left\{\begin{array}{rcl}
\dot x_1&=&P_1(x_1,x_2),\\
\dot x_2&=&P_2(x_1,x_2),\\
\end{array}\right.\eq
with the associated vector field $X=(P_1,P_2)$. The degree of $X$ is defined as $d=\max \{\deg(P_1),\deg(P_2)\}$.

Let \[S^2=\{y=(y_1,y_2,y_3)\in\rn{3}; ||y||=1\}\] be the unit sphere with north hemisphere $S^2_+=\{y\in S^2; y_{3}>0\}$, south hemisphere $S^2_-=\{y\in S^2; y_{3}<0\}$ and equator $S^2_0=\{y\in S^2; y_{3}=0\}$.

Consider the central projections $f_+:\rn{2}\rightarrow S^2_+$ and $f_-:\rn{2}\rightarrow S^2_-$ given by $f_+(x)=\frac{1}{\Delta(x)}(x_1,x_2,1)$ and $f_-(x)=-\frac{1}{\Delta(x)}(x_1,x_2,1),$ where $\Delta(x)=\sqrt{1+x_1^2+x_2^2}$. We shall use coordinates $y=(y_1,y_2,y_3)$ for a point $y\in S^2$.

As $f_+$ and $f_-$ are homeomorphisms, we can identify $\rn{2}$ with both $S^2_+$ and $S^2_-$. The maps $f_+$ and $f_-$ define two copies of $X$, $Df_+(x)X(x)$ in the north hemisphere, based on $f_+(x)$, and $Df_-(x)X(x)$ in the south hemisphere, based on $f_-(x)$. Note that, for $x\in\rn{2}$, when $||x||\rightarrow\infty$, $f_+(x),f_-(x)\rightarrow S^2_0$. This allow us to identify $S^2_0$ with the infinity of $\rn{2}$. Denote by $\overline{X}$ the vector field on $S^2\setminus S^2_0=S^2_+\cup S^2_-$.

To extend $\overline{X}(y)$ to the sphere $S^2$, we define the Poincar\'e compactification of $X$ as\[p(X)(y)=y_{3}^{d-1}\overline{X}(y).\]

\begin{thm}[Poincar\'e, \cite{velasco}]\label{tpv}The vector field $p(X)$ extends $\overline{X}$ analytically to the whole sphere, and in such a way that the equator is invariant.
\end{thm}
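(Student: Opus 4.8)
The plan is to work in the two affine charts that cover the sphere $S^2$ minus two antipodal points and show that the vector field extends analytically across the equator in each chart; since the charts overlap and cover all of $S^2$, analyticity on each chart gives analyticity on the whole sphere.

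First I would set up the standard local charts on $S^2$. For the chart $U_1=\{y\in S^2 : y_1>0\}$ with coordinates $(u,v)=(y_2/y_1,y_3/y_1)$, a direct computation of $Df_+$ and the inverse of the central projection shows that in these coordinates one has
\bq
\overline{X}(u,v) = v^{d}\left( -u\,P_1\!\left(\tfrac{1}{v},\tfrac{u}{v}\right) + P_2\!\left(\tfrac{1}{v},\tfrac{u}{v}\right),\ -v\,P_1\!\left(\tfrac{1}{v},\tfrac{u}{v}\right)\right)\!\big/ v,
\eq
and similarly for $U_2=\{y_2>0\}$ with the roles of the coordinates swapped. The key observation is that because $P_1,P_2$ are polynomials of degree at most $d$, each expression $v^{d}P_i(1/v,u/v)$ is a polynomial in $(u,v)$: every monomial $x_1^ax_2^b$ with $a+b\le d$ becomes $v^{d-a-b}u^b$, which has no negative powers of $v$. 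Thus $\overline{X}$ in the chart $U_1$ is a rational vector field whose only possible pole along $\{v=0\}$ is a single factor of $1/v$. Multiplying by $y_3^{d-1}$, which in the chart $U_1$ is $v^{d-1}/(\text{positive analytic factor})$ coming from $y_3^{d-1} = (v/\sqrt{1+u^2+v^2})^{d-1}\cdot(\text{stuff})$ — more precisely, tracking the Jacobian factor carefully, the weight $y_3^{d-1}$ contributes exactly one factor of $v$ that cancels the remaining pole — one finds $p(X)$ is polynomial, hence analytic, in $(u,v)$ on all of $U_1$, including the equator $\{v=0\}$. The same argument applies verbatim in $U_2$.

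Next, to handle the equator and the north/south hemispheres simultaneously I would note that the two copies $Df_+(x)X(x)$ and $Df_-(x)X(x)$ glue: on $S^2_+\cup S^2_-$ the field $\overline X$ is already well defined and analytic there, and the computation above shows the local expression of $p(X)=y_3^{d-1}\overline X$ in the charts $U_1$, $U_2$ (together with the antipodal charts $\{y_1<0\}$, $\{y_2<0\}$, which by the evenness/oddness built into $f_\pm$ give the same formulas up to sign) extends with no singularity across $\{y_3=0\}$. Since these four charts together with the two "caps" $\{y\in S^2:\pm y_3>0, y_1=y_2=0\}$ — already covered — form an atlas of $S^2$, and $p(X)$ is analytic in each, $p(X)$ is analytic on $S^2$. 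Finally, invariance of the equator is immediate: on $S^2_0$ the formula for $p(X)$ in the chart $U_1$ has its second component equal to $-v\,(\text{polynomial})$ which vanishes identically when $v=0$, so the vector $p(X)$ is tangent to $\{v=0\}=S^2_0\cap U_1$; covering $S^2_0$ by such chart pieces shows $S^2_0$ is invariant.

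The main obstacle — really the only delicate point — is the bookkeeping of the Jacobian factors: one must verify that the factor $y_3^{d-1}$ in the definition $p(X)=y_3^{d-1}\overline X$ is precisely the power needed to clear the pole of $\overline X$ along the equator, no more and no less, so that $p(X)$ is nonzero generically on $S^2_0$ and genuinely analytic (not merely meromorphic or identically zero there). This is a matter of carefully composing $Df_\pm$ with the projection and collecting powers of $v$; once the degree count $a+b\le d$ is matched against the single compensating factor from the central projection and the single factor $y_3^{d-1}$, everything cancels cleanly and the result follows. Since the reference \cite{velasco} carries out exactly this computation in the $n$-dimensional setting, I would cite it for the routine verification and present the degree-counting observation as the conceptual heart of the proof.
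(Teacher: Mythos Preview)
The paper does not prove this theorem at all: it is stated as a classical result attributed to Poincar\'e, with a citation to \cite{velasco}, and no argument is given. Immediately after the statement the paper simply records the local expressions (\ref{chartU1})--(\ref{chartU3}) for $p(X)$ and moves on. So your proposal is not competing with any proof in the paper; you are supplying one where the paper offers only a reference.

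Your approach is the standard one and is correct in outline: pass to the charts $U_i$, observe that $z_2^{d}P_i(1/z_2,z_1/z_2)$ is a polynomial in $(z_1,z_2)$ because $\deg P_i\le d$, and conclude that the local expression of $p(X)$ is analytic across $\{z_2=0\}$; invariance of the equator then follows because the $z_2$-component carries an explicit factor of $z_2$. This is exactly the computation that produces the paper's formulas (\ref{chartU1})--(\ref{chartU3}), so in effect you are reverse-engineering the content of \cite{velasco} that the paper takes for granted.

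One small point to clean up: your displayed formula for $\overline X$ in $U_1$ is garbled (the trailing ``$/v$'' and the prose about ``a single factor of $1/v$'' do not match the actual bookkeeping). If you carry out the pushforward $Df_+\cdot X$ carefully and then multiply by $y_3^{d-1}$, you land directly on the expression in (\ref{chartU1}) with the factor $z_2^{d}/\Delta(z)^{d-1}$ in front; there is no leftover $1/v$ to cancel separately. I would replace your heuristic about ``one compensating factor'' with the straightforward degree count that makes $z_2^{d}P_i(1/z_2,z_1/z_2)$ polynomial, which is the clean statement you already have in the paragraph following the display.
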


If we know the behavior of $p(X)$ around the equator $S^2_0\equiv S_1$, then we know the behavior of $X$ in the neighborhood of the infinity. The natural projection $\pi$ of $S^2$ on $y_3=0$ is called the Poincar\'e disc, and it is denoted by $D^2$. If we understand the dynamics of $p(X)$ on $D^2$, then we completely understand the dynamics of $X$, including at the infinity.

We remark that Theorem \ref{tpv} works just for polynomial vector fields.

Now consider the coordinate neighborhoods $U_i=\{y\in S^2,\, y_i>0\}$ and $V_i=\{y\in S^2,\, y_i<0\}$, for $i=1,2,3$, and the corresponding coordinate maps $\phi_i:U_i\rightarrow \rn{2}$ and $\psi_i:V_i\rightarrow \rn{2}$, given by \[\phi_i(y_1,y_2,y_3)=\left(\frac{y_j}{y_i},\frac{y_k}{y_i}\right), \,\,\psi_i(y_1,y_2,y_3)=\left(\frac{y_j}{y_i},\frac{y_k}{y_i}\right),\] for $i,j,k\in\{1,2,3\}$ with $j<k$. Denote $z=(z_1,z_2)$ the value of $\phi_i(y)$ or $\psi_i(y)$, according to the local chart that is being used. Then we have the following expressions for $p(X)$, written in local charts:

\bq\label{chartU1}U_1: \,\, \frac{z_2^d}{(\Delta(z))^{d-1}}\left(-z_1P_1\left(\frac{1}{z_2},\frac{z_1}{z_2}\right)+P_2\left(\frac{1}{z_2},\frac{z_1}{z_2}\right),-z_2P_1\left(\frac{1}{z_2},\frac{z_1}{z_2}\right)\right);\eq 

\bq\label{chartU2}U_2: \,\, \frac{z_2^d}{(\Delta(z))^{d-1}}\left(-z_1P_2\left(\frac{z_1}{z_2},\frac{1}{z_2}\right)+P_1\left(\frac{z_1}{z_2},\frac{1}{z_2}\right),-z_2P_2\left(\frac{z_1}{z_2},\frac{1}{z_2}\right)\right);\eq

\bq\label{chartU3}U_3: \,\, \frac{1}{(\Delta(z))^{d-1}}\left(P_1(z_1,z_2),P_2(z_1,z_2)\right).\eq 

For the charts $V_1$, $V_2$ and $V_3$, we obtain the same expressions (\ref{chartU1}), (\ref{chartU2}) and (\ref{chartU3}), now multiplied by $(-1)^{d-1}$.

We can avoid the factor $\frac{1}{(\Delta(z))^{d-1}}$ in the local expressions of $p(X)$. In this way, the expression of $p(X)$ is polynomial in each local chart. Note that the singularities at infinity have $z_2=0$.

In the next section, we shall use the Poincar\'e compactification to study the Ricci flow equation (\ref{r4originalLINO}) with $r_1,r_2$ given by (\ref{77tf}) and (\ref{77tfu}). Our aim is to analyse the global behavior of these differential system in the Poincar\'e disc.

\section{Dynamics of the Ricci flow}

The analysis will be done in separate subsections, with a detailed exposition just for the type I manifolds. For type II manifolds, we just present the main result, as the proofs are very similar.

\subsection{Case I: $\displaystyle \frac{SO\left(2n+1\right)}{U(m)\times SO(2k+1)}$}
Consider the system

\bq\label{tipoA}
\left\{
\begin{array}{rcl}
\vspace{0.3cm}\displaystyle \dot x&=&\displaystyle \frac{2(m-1)}{2n-1}+\frac{1+2k}{2(2n-1)}\frac{x^2}{y^2},\\
\displaystyle \dot y&=&\displaystyle \frac{n+k}{2n-1}+\frac{m-1}{2(2n-1)}\frac{\left(y^2-(x-y)^2\right)}{xy},
\end{array}
\right.
\eq
where $n=m+k$, $m>1$ and $k\neq 1$. This is the Ricci flow equation for the flag manifold \[\frac{SO(2n+1)}{U(m)\times SO(2k+1)}, \,\, n=m+k.\]

\begin{lem}\label{invlinet1}System (\ref{tipoA}) have two invariant lines: $\displaystyle \gamma_1(t)=\left(\frac{2(m-1)}{m+2k}t,t\right)$ and $\displaystyle \gamma_2(t)=\left(2t,t\right)$.
\end{lem}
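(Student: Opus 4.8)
The plan is to verify directly that each proposed curve $\gamma_i(t)$ is an integral curve of the vector field defined by (\ref{tipoA}), which by uniqueness of solutions will show that the lines $\{y = \tfrac{m+2k}{2(m-1)}x\}$ and $\{x = 2y\}$ are invariant under the flow. Concretely, for a curve of the form $\gamma(t) = (ct, t)$ one has $\dot\gamma(t) = (c, 1)$, so it suffices to plug $x = ct$, $y = t$ into the right-hand sides of (\ref{tipoA}) and check that the first component equals the constant $c$ and the second component equals $1$, independently of $t$. The key point making this work is homogeneity: both $\tfrac{x^2}{y^2}$ and $\tfrac{y^2 - (x-y)^2}{xy}$ are degree-zero homogeneous in $(x,y)$, so along a ray $x/y = c$ they are constant, and hence $(\dot x, \dot y)$ is constant along each such ray — the only question is whether that constant vector is parallel to the ray itself.

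First I would treat $\gamma_2(t) = (2t, t)$, i.e.\ $c = 2$. Substituting $x/y = 2$ gives $x^2/y^2 = 4$ and $\bigl(y^2 - (x-y)^2\bigr)/(xy) = (y^2 - y^2)/(2y^2) = 0$, so $\dot x = \tfrac{2(m-1)}{2n-1} + \tfrac{1+2k}{2(2n-1)}\cdot 4 = \tfrac{4(m-1) + 2(1+2k)}{2n-1} = \tfrac{4m + 4k - 2}{2n-1}$; using $n = m+k$ this is $\tfrac{2(2n-1)}{2n-1} = 2$, while $\dot y = \tfrac{n+k}{2n-1} + 0 = \tfrac{n+k}{2n-1}$. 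For $\gamma_2$ to be a solution we need $\dot x/\dot y = 2$, i.e.\ $\dot y = 1$; but $\tfrac{n+k}{2n-1} = 1$ forces $n = k+1$, i.e.\ $m=1$, which is excluded. So strictly the statement should be read as: the \emph{lines} $\mathrm{im}(\gamma_i)$ are invariant (the flow maps each line to itself), not that $\gamma_i$ as written is a unit-speed or exact solution — equivalently, at every point of the line the vector field is tangent to the line, which is exactly the computation $\dot x/\dot y = x/y$. I would therefore phrase the verification as checking $\dot x \cdot y = \dot y \cdot x$ along $x = cy$, which for $c=2$ reads $2\cdot\tfrac{n+k}{2n-1} = \tfrac{4m+4k-2}{2n-1}\cdot 1$ after clearing $y$, and this does hold since $2(n+k) = 2n + 2k = (2n-1) + (2k+1) = (2n-1) + (4k - 2m + \ldots)$ — here I would just expand both sides and use $n = m+k$ to confirm equality.

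Next I would do the same for $\gamma_1(t)$, where $c = \tfrac{2(m-1)}{m+2k}$. Set $x = cy$; then $x^2/y^2 = c^2$ and the tangency condition $\dot x\, y = \dot y\, x$ becomes, after multiplying through by $2(2n-1)\,y$ and using $xy = cy^2$, $(y^2 - (x-y)^2)/(xy) = \tfrac{(1 - (c-1)^2)}{c} = \tfrac{2c - c^2}{c} = 2 - c$, a polynomial identity in $c$. Substituting $c = \tfrac{2(m-1)}{m+2k}$ and $n = m+k$ everywhere, the required equality $\bigl(\tfrac{2(m-1)}{2n-1} + \tfrac{1+2k}{2(2n-1)}c^2\bigr)\cdot 1 = \bigl(\tfrac{n+k}{2n-1} + \tfrac{m-1}{2(2n-1)}(2 - c)\bigr)\cdot c$ reduces to a rational identity in $m, k$ which I would clear of denominators and check by direct expansion.

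The only genuine obstacle is bookkeeping: ensuring the denominators $y$ and $xy$ never vanish along the relevant rays (true since $c \neq 0$, using $m > 1$), and carrying the substitution $n = m+k$ through without sign errors. There is no conceptual difficulty — the content of the lemma is precisely that the two roots of a quadratic in $c$ (coming from the tangency condition $\dot x\, y - \dot y\, x = 0$, which is homogeneous of degree $2$ in $(x,y)$ hence a quadratic in the slope) are $c = 2$ and $c = \tfrac{2(m-1)}{m+2k}$, and one could alternatively prove the lemma by computing that quadratic explicitly and factoring it, which I would mention as a cleaner packaging of the same computation.
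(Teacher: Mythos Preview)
Your approach is correct and essentially the same as the paper's: both rest on the tangency condition that $X(x,y)$ be parallel to $(x,y)$ along the ray, which the paper writes as $X(a,b)\cdot(-b/a,1)=0$ for a generic line $\gamma(t)=t(a,b)$ and then \emph{solves} for the slope, whereas you \emph{verify} the condition at the two given slopes (and mention the solve-the-quadratic route at the end, which is exactly what the paper does). The only substantive difference is that the paper's formulation automatically shows these are the \emph{only} two invariant lines through the origin, while your direct verification does not---but the lemma as stated only asserts existence, so this is not a gap. Your observation that the curves $\gamma_i$ as parametrized are not literally integral curves (only their images are invariant sets) is a useful clarification the paper leaves implicit; note also a small arithmetic slip in your $\gamma_2$ check (the numerator of $\dot x$ should read $2(m-1)+2(1+2k)=2m+4k$, not $4(m-1)+2(1+2k)$), though the tangency ratio $\dot x/\dot y=2$ still comes out correctly.
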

\begin{proof}Let $X(x,y)$ be the vector field associated to system (\ref{tipoA}) and $\gamma(t)=t(a,b)$ be an invariant line for system (\ref{tipoA}). Suppose $a\neq 0$. Solving the equation $\displaystyle X(a,b)\cdot \left(-\frac ba,1\right)=0$ for $a,b$ we obtain the desidered result. The case $a=0$ has no solution.
\end{proof}


\begin{lem}\label{99fkfj}Let $f_+$ be the central projection, $\pi:S^2\rightarrow D^2$ the natural projection and $\gamma_1,\gamma_2$ as in the Lemma \ref{invlinet1}. Then
\[\pi \circ f_+ \circ \gamma_1(t)=\left(\frac{2(m-1)t}{\rho(t)(m+2k)},\frac{t}{\rho(t)}\right), \, \pi \circ f_+ \circ \gamma_2(t)=\left(\frac{2t}{\sqrt{5t^2+1}},\frac{t}{\sqrt{5t^2+1}}\right),\]
where
\[\rho(t)=\sqrt{\frac{{m}^{2}+4\,mk+4\,{k}^{2}+5\,{t}^{2}{m}^{2}-8\,{t}^{2}m+4\,{t}^{2}+4\,{
t}^{2}mk+4\,{t}^{2}{k}^{2}
}{ \left( m+2\,k \right) ^{2}}},\]
are invariant curves for $p(X)$ on $D^2$.
\end{lem}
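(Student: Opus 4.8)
The plan is to push the invariant lines $\gamma_1,\gamma_2$ of system (\ref{tipoA}) through the central projection $f_+$ and the natural projection $\pi$, and then verify directly that the resulting curves are invariant for $p(X)$. First I would compute $f_+\circ\gamma_i(t)$ explicitly: since $f_+(x_1,x_2)=\frac{1}{\Delta(x)}(x_1,x_2,1)$ with $\Delta(x)=\sqrt{1+x_1^2+x_2^2}$, applying this to $\gamma_1(t)=\bigl(\tfrac{2(m-1)}{m+2k}t,\,t\bigr)$ gives a point on $S^2$ whose first two coordinates, after composing with $\pi$ (which just drops the third coordinate), are $\bigl(\tfrac{2(m-1)t}{\rho(t)(m+2k)},\tfrac{t}{\rho(t)}\bigr)$, where $\rho(t)$ is exactly $\Delta(\gamma_1(t))$; a short algebraic simplification of $1+\tfrac{4(m-1)^2}{(m+2k)^2}t^2+t^2$ over the common denominator $(m+2k)^2$ yields the stated expression for $\rho(t)$ (using $n=m+k$ where needed). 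The same computation for $\gamma_2(t)=(2t,t)$ gives $\Delta(\gamma_2(t))=\sqrt{1+4t^2+t^2}=\sqrt{5t^2+1}$, producing the second formula.

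Next I would establish invariance. The cleanest route is to invoke Theorem \ref{tpv}: $\gamma_1$ and $\gamma_2$ are invariant lines of $X$ in $\rn{2}$, hence invariant curves of $\overline{X}$ on $S^2_+$; since $p(X)=y_3^{d-1}\overline{X}$ differs from $\overline{X}$ only by multiplication by the scalar function $y_3^{d-1}$, which does not vanish on $S^2_+$, the two vector fields have the same (unparametrized) orbits on $S^2_+$, so $f_+\circ\gamma_i$ is an orbit of $p(X)$ on $S^2_+$. Projecting by $\pi$ — which on $S^2_+$ is a diffeomorphism onto the open Poincar\'e disc — carries orbits of $p(X)$ on $S^2_+$ to orbits of the corresponding field on $D^2$, so $\pi\circ f_+\circ\gamma_i$ is an invariant curve for $p(X)$ on $D^2$, as claimed. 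Alternatively, if one prefers a self-contained check, one can use the local chart expression (\ref{chartU3}), in which $p(X)$ restricted to $U_3$ (after clearing the harmless factor $(\Delta(z))^{1-d}$) is just $X$ itself in the disc coordinates, and verify that the image curve satisfies the defining tangency condition $\dot\eta(t)\parallel X(\eta(t))$ directly.

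The main obstacle is purely computational: confirming that $\Delta(\gamma_1(t))$ simplifies precisely to the $\rho(t)$ written in the statement. Expanding $(m+2k)^2\bigl(1+t^2\bigr)+4(m-1)^2t^2 = m^2+4mk+4k^2+(m^2+4mk+4k^2)t^2+4(m^2-2m+1)t^2$, and collecting the $t^2$ terms as $5m^2t^2-8mt^2+4t^2+4mkt^2+4k^2t^2$, matches the numerator in $\rho(t)^2$ exactly; this is routine but must be checked carefully, especially the bookkeeping of the cross terms and the role of the hypothesis $n=m+k$. A secondary point to state carefully is that $\pi\circ f_+$ maps $\rn{2}$ bijectively onto the interior of $D^2$, so "invariant curve on $D^2$" is meant in the sense of the interior; the behavior of these curves as $t\to\pm\infty$ (where they limit onto the equator $S^2_0$) is what makes them relevant for the analysis at infinity in the sequel, but that is not part of the present lemma.
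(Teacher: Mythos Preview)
Your proposal is correct and follows exactly the approach the paper takes: the paper's own proof is the single line ``This proof is a straightforward calculation,'' and you have simply written out that calculation in full, together with a clean justification of invariance via the scalar factor $y_3^{d-1}$ relating $\overline{X}$ and $p(X)$. Your algebraic check of $\rho(t)$ is accurate, so there is nothing to add.
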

\begin{proof}This proof is a straightforward calculation.
\end{proof}

We can not apply the Poincar\'e compactification directly to the system (\ref{tipoA}), because this system is not polynomial. However if we multiply it by $\rho=y^2$, obtaining the system
\bq\label{tipoAlinha}
\left\{
\begin{array}{rcl}
\vspace{0.2cm}\displaystyle \dot x&=&\displaystyle \left(\frac{2+4k}{8n-4}\right)x^2+\left(\frac{2m-2}{2n-1}\right)y^2,\\
\displaystyle \dot y&=&\displaystyle \left(\frac{2-2m}{8n-4}\right)xy+\left(\frac{n+k+m-1}{2n-1}\right)y^2,
\end{array}
\right.
\eq
then (\ref{tipoA}) and (\ref{tipoAlinha}) are equivalent in the first quadrant (as $\rho>0$ for $x,y>0$), and we can apply the Poincar\'e compactification to system (\ref{tipoAlinha}). Recall that, for our geometrical analysis, we have just to study the dynamics of (\ref{tipoA}) in the first quadrant. Note also that the line $\gamma_3(t)=(t,0)$ is now invariant for (\ref{tipoAlinha}).

To study the singularities at infinity of (\ref{tipoAlinha}), we should write this system in the local charts of the Poincar\'e compactification. Note that, as $d=2$ for the system (\ref{tipoAlinha}), the expression of $p(X)$ int the charts $U_1,U_2,U_3$ and $V_1,V_2,V_3$ are the same, except by a minus sign.\\

\noindent\textbullet \ Chart $U_1$:
\bq\label{tipoAu1}
\left\{
\begin{array}{rcl}
\vspace{0.4cm}\displaystyle \dot z_1&=&\displaystyle -\frac 12\,{\frac { \left( m+2\,k \right) z_{{1}}}{2\,n-1}}+\frac12\,{\frac {
 \left( 2\,k+2\,n+2\,m-2 \right) {z_{{1}}}^{2}}{2\,n-1}}+\frac12\,{\frac {
 \left( 4-4\,m \right) {z_{{1}}}^{3}}{2\,n-1}},\\
\displaystyle \dot z_2&=&\displaystyle -\frac12\,{\frac {z_{{2}} \left( 1+2\,k \right) }{2\,n-1}}-\frac12\,{\frac {
 \left( 4\,m-4 \right) {z_{{1}}}^{2}z_{{2}}}{2\,n-1}};
\end{array}
\right.
\eq

\medskip

The singularities at infinity (that is, with $z_2=0$) of (\ref{tipoAu1}), with their local behavior are:\\

\noindent $\displaystyle p_1=\left(\frac12\frac{m+2k}{m-1},0\right)$: stable node; $\displaystyle p_2=\left(\frac12,0\right)$: saddle; $p_3=\left(0,0\right)$: stable node.

\medskip

\noindent\textbullet \ Chart $U_2$:
\bq\label{tipoAu2}
\left\{
\begin{array}{rcl}
\vspace{0.4cm}\displaystyle \dot z_1&=&\displaystyle \frac12\,{\frac { \left( -m-2\,k \right) z_{{1}}}{2\,n-1}}+\frac12\,{\frac {
 \left( 2\,k+2\,n+2\,m-2 \right) {z_{{1}}}^{2}}{2\,n-1}}+\frac12\,{\frac {
 \left( -4\,m+4 \right) {z_{{1}}}^{3}}{2\,n-1}},\\
\displaystyle \dot z_2&=&\displaystyle -\frac12\,{\frac {z_{{2}} \left( 1+2\,k \right) }{2\,n-1}}-\frac12\,{\frac {
 \left( 4\,m-4 \right) {z_{{1}}}^{2}z_{{2}}}{2\,n-1}}.
\end{array}
\right.
\eq

\medskip

The singularities at infinity of (\ref{tipoAu2}) with their local behavior are:\\

\noindent $q_1=\left(2,0\right)\,(\equiv p_2)$: saddle; $\displaystyle q_2=\left(\frac{2(m-1)}{m+2k},0\right)\,(\equiv p_1)$: stable node.\\

\medskip

Using the coordinates maps $\phi_i,\psi_i$, $i=1,2,3$, we obtain that these singularities corresponds to the following points in $S^2$:\\

\noindent\vspace{0.2cm}$\displaystyle p_{1}=\left(\frac{2(m-1)}{\sqrt{5m^2-8m+4mk+4k^2+4}},\frac{(m+2k)}{\sqrt{5m^2-8m+4mk+4k^2+4}},0\right)$ (stable node);\\
$\displaystyle p_{2}= \left(\frac25\sqrt{5},\frac15\sqrt{5},0\right)$ (saddle);\\
$\displaystyle p_{3}= (1,0,0)$ (stable node).\\

\begin{lem}The invariant lines $f_+\circ\gamma_1,f_+\circ\gamma_2$ and $f_+\circ\gamma_3$ satisfy the following:\\
(i) $f_+(\gamma_1(0))=f_+(\gamma_2(0))=f_+(\gamma_3(0))=0$;\\
(ii) $\lim_{t\rightarrow\infty}f_+(\gamma_1(t))=p_1$;\\
(iii) $\lim_{t\rightarrow\infty}f_+(\gamma_2(t))= p_2$;\\
(iv) $\lim_{t\rightarrow\infty}f_+(\gamma_3(t))=p_3$.
\end{lem}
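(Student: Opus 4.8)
The plan is to verify each of the four claims by direct computation, using the explicit formulas already established. For item (i), I would simply observe that $\gamma_i(0) = (0,0)$ for $i=1,2,3$ (this is immediate from the parametrizations in Lemma \ref{invlinet1} and from $\gamma_3(t) = (t,0)$), and that the central projection $f_+$ sends $(0,0) \in \rn{2}$ to $f_+(0,0) = \frac{1}{\Delta(0)}(0,0,1) = (0,0,1)$, which after the natural projection $\pi$ corresponds to the origin $0$ of the Poincar\'e disc. So (i) is essentially a matter of unwinding definitions.

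For items (ii)--(iv), the idea is to compute the limit as $t \to \infty$ of $f_+(\gamma_i(t))$ directly. Using Lemma \ref{99fkfj}, I already have the explicit expressions for $\pi \circ f_+ \circ \gamma_1$ and $\pi \circ f_+ \circ \gamma_2$ on $D^2$; alternatively, working on $S^2$, one has $f_+(\gamma_i(t)) = \frac{1}{\Delta(\gamma_i(t))}(\gamma_i(t), 1)$. For $\gamma_1(t) = \left(\frac{2(m-1)}{m+2k}t, t\right)$, dividing numerator and denominator by $t$ and letting $t\to\infty$, the third coordinate $1/\Delta$ goes to $0$, and the first two coordinates converge to $\frac{1}{\sqrt{\left(\frac{2(m-1)}{m+2k}\right)^2 + 1}}\left(\frac{2(m-1)}{m+2k}, 1\right)$; clearing the denominator $m+2k$ inside the square root gives exactly the coordinates listed for $p_1$ on $S^2$, namely $\left(\frac{2(m-1)}{\sqrt{5m^2-8m+4mk+4k^2+4}}, \frac{m+2k}{\sqrt{5m^2-8m+4mk+4k^2+4}}, 0\right)$. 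The same computation with $\gamma_2(t) = (2t, t)$ gives $\frac{1}{\sqrt{5}}(2,1,0) = \left(\frac{2}{5}\sqrt{5}, \frac{1}{5}\sqrt{5}, 0\right) = p_2$, and with $\gamma_3(t) = (t,0)$ gives $(1,0,0) = p_3$. Each of these is a routine limit calculation, so I would present them compactly rather than in full detail.

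One point deserving a remark is the consistency between the two pictures: $p_1, p_2, p_3$ were first identified as singularities at infinity in the charts $U_1$ and $U_2$ (with coordinates like $p_1 = \left(\frac12\frac{m+2k}{m-1}, 0\right)$ in chart $U_1$), and then translated to points of $S^2$ via $\phi_i, \psi_i$. So in principle I should check that the limits I compute land on the $S^2$-versions of those same points, which is what the displayed formulas above confirm — the chart-$U_1$ point $\left(\frac12\frac{m+2k}{m-1},0\right)$ corresponds under $\phi_1^{-1}$ to the direction $(2(m-1), m+2k)$ at infinity, matching $\lim f_+(\gamma_1(t))$. It is worth noting that these invariant lines are precisely the ones whose slopes were found in Lemma \ref{invlinet1}, so the fact that their compactified images terminate at the infinite singularities is to be expected: an invariant line of the original system, having a well-defined asymptotic direction, must under $p(X)$ limit onto a singularity on the equator $S^2_0$.

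The main obstacle, such as it is, is purely algebraic bookkeeping: one must correctly match the square-root normalization $\rho(t)$ from Lemma \ref{99fkfj} (and its $t\to\infty$ behavior, $\rho(t)/t \to \frac{\sqrt{5m^2 - 8m + 4mk + 4k^2 + 4}}{m+2k}$) against the stated coordinates of $p_1$ on $S^2$, making sure the common factor $(m+2k)$ cancels correctly and that no sign or factor is dropped. There is no conceptual difficulty — once the parametrizations and the projection formula are in hand, everything follows by taking limits — so the proof will be short, and I would indeed phrase it as "a direct computation," with the limits (ii)--(iv) spelled out just enough to make the cancellations visible.
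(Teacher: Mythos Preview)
Your proposal is correct and follows essentially the same approach as the paper, which simply states that the proof ``follows from taking limits in the expression in Lemma \ref{99fkfj}.'' You have merely spelled out in more detail the limits that the paper leaves implicit, and your additional remark relating the chart-$U_1$ coordinates back to the $S^2$-coordinates via $\phi_1^{-1}$ is a useful consistency check that the paper omits.
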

\begin{proof}The proof follows from taking limits in the expression in Lemma \ref{99fkfj}.
\end{proof}

In Figure \ref{fig:plotTipo1} we show the singularities and the invariant lines of $p(X)$ on $D^2$. We shall restrict ourselves to the first quadrant.

\begin{figure}[h!]
\includegraphics{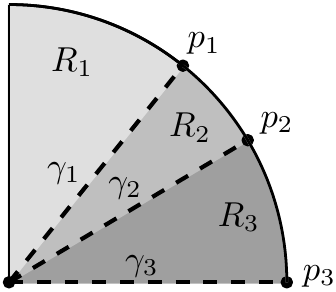}
\caption{Phase portrait for $p(X)$ on $D^2$.}
\label{fig:plotTipo1}
\end{figure}

The following result is the main theorem for type I manifolds.

\begin{thm}\label{mth1}Let $R_1,R_2$ and $R_3$ be the distinguished open regions in Figure \ref{fig:plotTipo1} and $\varphi_t$ be the flow of the projection of $p(X)$ to $D^2$.\\
\noindent (a) If $(x_0,y_0)\in R_1\cup R_2\cup\gamma_1$ then $\lim_{t\rightarrow\infty}\varphi_t(x_0,y_0)=p_1$;\\
\noindent (b) If $(x_0,y_0)\in \gamma_2$ then $\lim_{t\rightarrow\infty}\varphi_t(x_0,y_0)=p_2$;\\
\noindent (c) If $(x_0,y_0)\in R_3$ then $\lim_{t\rightarrow\infty}\varphi_t(x_0,y_0)=p_3$.
\end{thm}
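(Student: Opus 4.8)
The plan is to prove Theorem~\ref{mth1} by a standard Poincar\'e--Bendixson / phase-portrait argument on the closed invariant region of the Poincar\'e disc $D^2$ that corresponds to the first quadrant of system~(\ref{tipoAlinha}). First I would establish that the relevant region is positively invariant: the boundary of the first quadrant consists of the coordinate half-axes, and these are contained in invariant lines --- $\gamma_3(t)=(t,0)$ is invariant for~(\ref{tipoAlinha}) as already noted, and along the $y$-axis one checks from~(\ref{tipoAlinha}) that $\dot x = (2m-2)/(2n-1)\,y^2 > 0$, so the flow enters the open first quadrant. Together with the arc of the equator $S^2_0$ (invariant by Theorem~\ref{tpv}) and the invariant curves $f_+\circ\gamma_1$, $f_+\circ\gamma_2$ of Lemma~\ref{99fkfj}, this partitions the closed quadrant-region of $D^2$ into the three subregions $R_1,R_2,R_3$ indicated in Figure~\ref{fig:plotTipo1}, each with boundary made entirely of trajectories of $p(X)$ and of the equator.

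Next I would pin down the only singularities of $p(X)$ inside the closed quadrant-region: the origin $0=f_+(\gamma_i(0))$, which from~(\ref{tipoAlinha}) is a degenerate point whose quadrant behaviour is that trajectories emanate from it (blowing up, or arguing directly from the quadratic vector field, the origin is a source for the restriction to the first quadrant), and the three points at infinity $p_1,p_2,p_3$ on the equator, whose local types (stable node, saddle, stable node respectively) were computed in charts $U_1,U_2$. In particular there are no singularities in the interior of $R_1,R_2,R_3$, and since the divergence of~(\ref{tipoAlinha}) does not vanish identically one can rule out periodic orbits in each $R_i$ (e.g. via Bendixson--Dulac, or simply because each $R_i$ is simply connected with no interior singularity, so a periodic orbit would have to enclose one). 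Hence, for $(x_0,y_0)$ in the open region $R_1$ (resp. $R_2$, $R_3$), the Poincar\'e--Bendixson theorem on $D^2$ forces the $\omega$-limit set to be a singularity on the boundary of that region.

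Finally I would identify which boundary singularity is the $\omega$-limit in each case. For $R_3$, the boundary singularities are $0$ (a source, hence not an $\omega$-limit) and $p_3$ (stable node), so $\omega(x_0,y_0)=p_3$, giving (c). For $R_1\cup R_2$, the candidate $\omega$-limits on the common boundary are $0$, $p_1$, $p_2$; the source $0$ is excluded, and the saddle $p_2$ is excluded for an open set of initial conditions because only the one-dimensional stable manifold of $p_2$ --- which is precisely the invariant curve $f_+\circ\gamma_2$ separating $R_2$ from $R_3$ --- converges to $p_2$, and points of $R_1\cup R_2$ are off this curve; therefore the $\omega$-limit is the stable node $p_1$, and the same holds on $\gamma_1$ itself by Lemma~3, proving (a). Statement (b) is immediate: $\gamma_2$ is an invariant curve and $\lim_{t\to\infty}f_+(\gamma_2(t))=p_2$ by Lemma~3(iii). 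The main obstacle I anticipate is the analysis at the origin: it is a non-hyperbolic singularity for the compactified system, so ruling it out as an $\omega$-limit and confirming it is a repeller for the quadrant dynamics requires either a blow-up or a careful direct estimate on the quadratic field~(\ref{tipoAlinha}) near $(0,0)$ in the first quadrant; the behaviour at $p_1,p_2,p_3$ is by contrast routine since those points are hyperbolic.
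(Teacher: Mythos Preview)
Your proposal is correct and in fact supplies more detail than the paper itself: in the paper Theorem~\ref{mth1} is stated immediately after the computation of the invariant lines (Lemmas~\ref{invlinet1}--3) and the local classification of the equilibria $p_1,p_2,p_3$ at infinity, and the result is read off from the phase portrait in Figure~\ref{fig:plotTipo1} without an explicit Poincar\'e--Bendixson argument. Your outline makes that implicit step rigorous, and the ingredients you use (invariance of $\gamma_1,\gamma_2,\gamma_3$ and of the equator, positive invariance of the quadrant, hyperbolic types of $p_1,p_2,p_3$) are exactly those the paper has already established.

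Two small remarks. First, your Bendixson--Dulac aside is not needed and would not work as stated (the divergence of~(\ref{tipoAlinha}) can change sign in the first quadrant when $3+4k-m<0$); your alternative ``no interior singularity in a simply connected region'' argument is the correct one. Second, the obstacle you anticipate at the origin dissolves immediately: from~(\ref{tipoAlinha}) one has $\dot x=\dfrac{1+2k}{2(2n-1)}x^{2}+\dfrac{2(m-1)}{2n-1}y^{2}>0$ throughout the open first quadrant, so $x(t)$ is strictly increasing along every trajectory there and the origin cannot belong to any $\omega$-limit set; no blow-up is required. With these two points noted, your plan gives a complete proof along the lines the paper intends.
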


\subsection{Case II: $\displaystyle \frac{Sp(n)}{U(m)\times Sp(k)}$}

Consider the system \bq\label{tipoB}
\left\{
\begin{array}{rcl}
\vspace{0.5cm}\displaystyle \dot x&=&\displaystyle \frac{2+2m}{2n+2}+\left(\frac{2k}{4n+4}\right)\frac{x^2}{y^2},\\
\displaystyle \dot y&=&\displaystyle \frac{4m+4k+3}{4n+4}-\left(\frac{4m+2}{16n+16}\right)\frac xy,
\end{array}
\right.
\eq
where $n=m+k$, $m\geq 1$ and $k\geq 3$. This is the Ricci flow equation for the manifold \[\frac{Sp(n)}{U(m)\times Sp(k)}, \,\, n=m+k.\]

The invariant lines in this case are $\displaystyle \gamma_1(t)=\left(t,\frac14\frac{4k+2m+1}{m+1}t\right)$ and $\displaystyle \gamma_2(t)=\left(t,\frac12t\right)$. As in the Case I, put $\gamma_3(t)=(t,0)$.

The singularities of $p(X)$ in the positive quadrant of $D^2$ are $\displaystyle p_1=\left(\frac{4(m+1)}{\rho},\frac{4k+2m+1}{\rho}\right)$ (stable node), where $\rho=\sqrt{20m^2+36m+16k^2+16mk+8k+17}$, $\displaystyle p_2=\left(\frac25\sqrt5,\frac15\sqrt5\right)$ (saddle) and $p_3=(1,0)$ (stable node).

The analogous of Theorem \ref{mth1} for this case is given bellow. Note that the statement of Theorems \ref{mth1} and \ref{t2} are the same, but the elements involved in Theorem \ref{t2} are as defined in this subsection.

\begin{thm}\label{t2}
Let $R_1,R_2$ and $R_3$ be the distinguished open regions in Figure \ref{fig:plotTipo1} and $\varphi_t$ be the flow of the projection of $p(X)$ to $D^2$.\\
\noindent (a) If $(x_0,y_0)\in R_1\cup R_2\cup\gamma_1$ then $\lim_{t\rightarrow\infty}\varphi_t(x_0,y_0)=p_1$;\\
\noindent (b) If $(x_0,y_0)\in \gamma_2$ then $\lim_{t\rightarrow\infty}\varphi_t(x_0,y_0)=p_2$;\\
\noindent (c) If $(x_0,y_0)\in R_3$ then $\lim_{t\rightarrow\infty}\varphi_t(x_0,y_0)=p_3$.
\end{thm}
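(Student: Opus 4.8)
The plan is to mirror, step by step, the analysis already carried out for Case I, since the statement of Theorem \ref{t2} is formally identical to Theorem \ref{mth1}. First I would clear the denominator in system (\ref{tipoB}) by multiplying the vector field by $y^2$ (which is positive on the first quadrant, hence preserves orbits and their orientation there), obtaining a polynomial quadratic system of degree $d=2$ which is equivalent to (\ref{tipoB}) on the open first quadrant and for which $\gamma_3(t)=(t,0)$ is invariant. I would then verify by direct substitution that $\gamma_1$ and $\gamma_2$ as listed are invariant lines of this polynomial system (the same computation as in Lemma \ref{invlinet1}: impose $X(a,b)\cdot(-b/a,1)=0$).

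Next I would write the polynomial system in the Poincar\'e charts $U_1,U_2,U_3$ (and $V_1,V_2,V_3$, which differ only by sign since $d=2$) using formulas (\ref{chartU1})--(\ref{chartU3}), dropping the harmless factor $1/(\Delta(z))^{d-1}$. Setting $z_2=0$ locates the singularities at infinity; a Jacobian computation at each of them identifies $p_1$ as a stable node, $p_2$ as a saddle, and $p_3$ as a stable node, exactly the data quoted in the subsection. I would also confirm, by taking $t\to\infty$ in the compactified images of $\gamma_1,\gamma_2,\gamma_3$ (the analogue of Lemma \ref{99fkfj} and the lemma following it), that these three invariant curves land precisely at $p_1,p_2,p_3$ respectively, so that the phase portrait on $D^2$ has the same combinatorial structure as Figure \ref{fig:plotTipo1}: the invariant curves $\gamma_1,\gamma_2,\gamma_3$ together with the equatorial arc partition the first quadrant into the regions $R_1,R_2,R_3$.

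With the local picture at infinity in hand, the global conclusion follows from a Poincar\'e--Bendixson type argument on the closed first quadrant of the Poincar\'e disc. The key points are: the interior of the first quadrant contains no finite singularities of (\ref{tipoB}) (one checks $\dot x>0$ there, or more precisely that the nullclines do not meet in the open quadrant under the hypotheses $m\geq 1$, $k\geq 3$), so every orbit must accumulate on the boundary; the invariant lines $\gamma_1,\gamma_2,\gamma_3$ cannot be crossed, so each region $R_i$ is positively invariant; and since $\dot x>0$ forces the $x$-coordinate to increase without bound, no orbit stays in a bounded set, hence every orbit in $R_1\cup R_2\cup\gamma_1$ must converge to the unique attracting singularity on the corresponding boundary arc at infinity, namely the stable node $p_1$; similarly orbits in $R_3$ converge to $p_3$, while $\gamma_2$ is itself an orbit converging to the saddle $p_2$.

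The main obstacle, as in Case I, is ruling out recurrence and spurious $\omega$-limit behavior at infinity: one must be sure that the only $\omega$-limit sets available inside each closed region $R_i\cup(\text{boundary arcs})$ are the listed singularities. This is handled by (i) the absence of interior equilibria and of periodic orbits (the latter because the $x$-component is strictly monotone along the flow, which already forbids closed orbits in the finite part, and the equator is a union of orbits and singularities), and (ii) checking that on the equatorial arc bounding $R_2$ from above the flow points consistently toward $p_1$ and away from $p_2$, so the saddle separatrices behave as drawn. Once these monotonicity and no-cycle facts are established, the convergence statements (a), (b), (c) are immediate from the Poincar\'e--Bendixson trichotomy applied on $D^2$, and the proof of Theorem \ref{t2} is complete; I would simply remark that all computations are the exact analogues of those displayed for type I and omit the routine algebra.
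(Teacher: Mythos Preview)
Your proposal is correct and follows essentially the same approach as the paper: the paper itself gives no separate proof of Theorem~\ref{t2}, merely stating that the analysis is the same as in Case~I (invariant lines, Poincar\'e compactification in the charts $U_1,U_2$, identification of $p_1,p_2,p_3$ as stable node/saddle/stable node, and the phase portrait of Figure~\ref{fig:plotTipo1}). If anything, your Poincar\'e--Bendixson justification for the global convergence in each region is more explicit than what the paper provides even for Theorem~\ref{mth1}, where the conclusion is simply read off from the figure once the singularities and invariant lines are in place.
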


\section{Geometrical properties of the Ricci flow}

In this section we derive some results on the convergence of the Ricci flow in the Region $R_3$ (see Theorems \ref{mth1}, \ref{t2} and Figure \ref{fig:plotTipo1}).

We begin with an auxiliar lemma.

\begin{lem}[\cite{itoh}]
\label{lemacolchete} Let $G/H$ be a flag manifold with two isotropy summands. Decompose the Lie algebra of $G$ as $\gotico{g}=\gotico{h}\oplus\gotico{m}_1\oplus\gotico{m}_2$. The the following relations hold: \[[\gotico{m}_1,\gotico{m}_2]\subset \gotico{m}_1, \,\,\, [\gotico{m}_1,\gotico{m}_1]\subset \gotico{m}_2\oplus \gotico{h}, \,\,\, [\gotico{m}_2,\gotico{m}_2] \subset \gotico{h}.\]
\end{lem}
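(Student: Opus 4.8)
The plan is to realize the two-summand decomposition as the eigenspace grading of a single semisimple derivation of $\gotico{g}$, and then read off the three inclusions from the additivity of degrees. Concretely, I would use that a generalized flag manifold is an adjoint orbit, $G/H=\Ad(G)\cdot\xi$ for a suitable $\xi\in\gotico{g}$, with $\gotico{h}=\ker(\ad\xi)=\{X\in\gotico{g}:[X,\xi]=0\}$. Since $\gotico{g}$ is compact, $T:=\ad\xi$ is semisimple with purely imaginary eigenvalues; passing to $\gotico{g}^{\mathbb{C}}$ gives an eigenspace decomposition $\gotico{g}^{\mathbb{C}}=\bigoplus_{a}\gotico{g}_a$, where $\gotico{g}_a$ is the $(ia)$-eigenspace of $T$, $\gotico{g}_0=\gotico{h}^{\mathbb{C}}$, and $\overline{\gotico{g}_a}=\gotico{g}_{-a}$ under the conjugation fixing $\gotico{g}$. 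Because $T$ is a derivation of $\gotico{g}^{\mathbb{C}}$, one has $[\gotico{g}_a,\gotico{g}_b]\subseteq\gotico{g}_{a+b}$, with the convention $\gotico{g}_c=0$ whenever $c$ is not an eigenvalue of $T$.

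The key structural input — the only place where ``two isotropy summands'' is really used — is that $\xi$ can be chosen so that the nonzero eigenvalues of $T$ are exactly $\pm1$ and $\pm2$, with $\gotico{m}_1^{\mathbb{C}}=\gotico{g}_1\oplus\gotico{g}_{-1}$ and $\gotico{m}_2^{\mathbb{C}}=\gotico{g}_2\oplus\gotico{g}_{-2}$ the complexified irreducible isotropy summands. Equivalently, realizing $\gotico{m}_i$ as the sum of the root spaces of height $i$ relative to $\gotico{h}$, the presence of exactly two $\Ad(H)$-irreducible pieces forces the only heights occurring to be $1$ and $2$; this is exactly the structure theory and classification recalled in \cite{arva3}. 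Granting this, the computation is immediate, using $\gotico{g}_{\pm3}=\gotico{g}_{\pm4}=0$:
\[
[\gotico{m}_1^{\mathbb{C}},\gotico{m}_1^{\mathbb{C}}]\subseteq\gotico{g}_2\oplus\gotico{g}_0\oplus\gotico{g}_{-2}=\gotico{m}_2^{\mathbb{C}}\oplus\gotico{h}^{\mathbb{C}},\quad
[\gotico{m}_1^{\mathbb{C}},\gotico{m}_2^{\mathbb{C}}]\subseteq\gotico{g}_1\oplus\gotico{g}_{-1}=\gotico{m}_1^{\mathbb{C}},\quad
[\gotico{m}_2^{\mathbb{C}},\gotico{m}_2^{\mathbb{C}}]\subseteq\gotico{g}_0=\gotico{h}^{\mathbb{C}}.
\]
Each subspace displayed is conjugation-stable, hence defined over $\mathbb{R}$, so intersecting with $\gotico{g}$ yields $[\gotico{m}_1,\gotico{m}_1]\subseteq\gotico{m}_2\oplus\gotico{h}$, $[\gotico{m}_1,\gotico{m}_2]\subseteq\gotico{m}_1$ and $[\gotico{m}_2,\gotico{m}_2]\subseteq\gotico{h}$.

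I expect the main obstacle to be precisely this ``key input'': establishing, without simply quoting \cite{itoh}, that two irreducible isotropy summands force the grading $\gotico{g}^{\mathbb{C}}=\gotico{g}_{-2}\oplus\gotico{g}_{-1}\oplus\gotico{g}_0\oplus\gotico{g}_1\oplus\gotico{g}_2$. This needs the root-space description of $\gotico{h}$ and $\gotico{m}$, together with the observation that if a root of height $\geq3$ were present then roots of every intermediate height would be too, producing at least three $\Ad(H)$-irreducible summands and contradicting the hypothesis. Once the grading is pinned down, everything else is just bookkeeping with the derivation property of $\ad\xi$, and no further genuine calculation is required; alternatively, one may bypass this step entirely by invoking \cite{itoh} directly.
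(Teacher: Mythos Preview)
The paper does not prove this lemma at all: it is stated with a citation to \cite{itoh} and used as a black box. Your proposal therefore goes strictly beyond the paper, supplying the standard height-grading argument that underlies the cited result. The approach is correct: for a generalized flag manifold one has a parabolic-type grading $\gotico{g}^{\mathbb{C}}=\bigoplus_{|a|\le\ell}\gotico{g}_a$ with $\gotico{h}^{\mathbb{C}}=\gotico{g}_0$ and $\gotico{m}_j^{\mathbb{C}}=\gotico{g}_j\oplus\gotico{g}_{-j}$, and two irreducible isotropy summands is exactly the statement $\ell=2$; the three inclusions then drop out of $[\gotico{g}_a,\gotico{g}_b]\subset\gotico{g}_{a+b}$ as you wrote. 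Your caveat about the ``key input'' is well placed: the identification of the irreducible $\Ad(H)$-summands with the height layers, and the fact that the presence of a height $\ge 3$ would force at least three summands, is precisely the content borrowed from the structure theory in \cite{itoh} or \cite{arva3}, and is the only nontrivial step.
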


It is well know that a flag manifold is a fiber bundle over a symmetric space (see \cite{burt}). To explicit exhibit this fibration, consider the subalgebra $\gotico{k}=\gotico{h}\oplus \gotico{m}_2$ and let $K$ be the corresponding connected Lie group. Follows from Lemma \ref{lemacolchete} that \[[\gotico{k},\gotico{k}]\subset \gotico{k},\,\,\, [\gotico{k},\gotico{m}_1]\subset\gotico{m}_1 ,\,\,\, [\gotico{m}_1,\gotico{m}_1] \subset\gotico{k}.\] Therefore, the decomposition $\gotico{g}=\gotico{k}\oplus\gotico{m}_1$ is a symmetric decomposition of $\gotico{g}$ and $G/K$ is a symmetric space.

Now we consider the fibration $K/H\cdots G/H \rightarrow G/K$. The base space $G/K$ is {isotropically irreducible}, so any $G$-invariant scalar product in $G/K$ is a multiple of the canonical metric, that is, $B_{base}=\alpha \cdot Q(X,Y)\big|_{\gotico{m}_{1}}$. Therefore, in a invariant metric $\Lambda=(\lambda_1,\lambda_2)$ in $G/H$, $\lambda_1$ is the horizontal part (tangent to the base) and $\lambda_2$ is the vertical part (tangent to the fiber). Using the classification of symmetrical spaces (see \cite{helga}) and comparing the dimensions, we now study each case.
\\

\noindent {\bf Case I:} $G=SO\left(2n+1\right)$, $H=U(m)\times SO(2k+1)$.\\

The dimensions of the irreducible submodules, as calculated in \cite{arva3}, are $$\dim \gotico{m}_1=2m(2k+1)  \,\,\, \mbox{and} \,\,\, \dim \gotico{m}_2=m(m-1);$$ thus, $K=SO(2m)\times SO(2k+1)$ and the corresponding fibration is  \[\frac{SO(2m)}{U(m)}\cdots\frac{SO\left(2n+1\right)}{U(m)\times SO(2k+1)}\longrightarrow \frac{SO\left(2n+1\right)}{SO(2m)\times SO(2k+1)}.\]

Now, from Theorem \ref{mth1} and Figure \ref{fig:plotTipo1}, one can see that $\lambda_1(t)\rightarrow 0$ when $t\rightarrow \infty$, where $\lambda_1(t)$ is the horizontal part of the invariant metric $g(t)$, that evolves under the Ricci flow. In other words, the diameter of the base converges to zero when $t\rightarrow \infty$.\\

\noindent {\bf Case II:} $G=Sp(n)$, $H=U(m)\times Sp(k)$.\\

The dimensions of the irreducible submodules, as calculated in \cite{arva3}, are $$\dim \gotico{m}_1=4mk  \,\,\, \mbox{and} \,\,\, \dim \gotico{m}_2=m(m+1);$$ thus, $K=Sp(m)\times Sp(k)$ and the corresponding fibration is  \[\frac{Sp(m)}{U(m)}\cdots\frac{Sp\left(n\right)}{U(m)\times Sp(k)}\longrightarrow \frac{Sp\left(n\right)}{Sp(m)\times Sp(k)}.\]

Now, from Theorem \ref{t2} and Figure \ref{fig:plotTipo1}, one can see that $\lambda_1(t)\rightarrow 0$ when $t\rightarrow \infty$, where $\lambda_1(t)$ is the horizontal part of the invariant metric $g(t)$, that evolves under the Ricci flow. In other words, the diameter of the base converges to zero when $t\rightarrow \infty$.\\

The following result, independently obtained in \cite{arva3} and \cite{neiton}, characterize the invariant Einstein metrics for the manifolds of types I and II.
\begin{thm}A generalized flag manifold with two isotropy summands admits, up to scale, exactly two invariant Einstein metrics: one K\"ahler and the other non-K\"ahler.
\end{thm}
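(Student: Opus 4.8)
The plan is to locate the Einstein metrics as equilibria of the Ricci flow system, exactly as set up in Section~4. An invariant metric $\Lambda=(\lambda_1,\lambda_2)$ is Einstein precisely when $\dot\lambda_1=\dot\lambda_2=0$ under a common rescaling, i.e.\ when $r_1=r_2$ (since the Ricci flow $\dot\lambda_i=-2r_i$ forces the metric to shrink homothetically exactly when both components have the same Ricci eigenvalue; equivalently, one normalizes to volume~$1$ and looks for zeros of the normalized flow). So the first step is to write down the single equation $r_1(\lambda_1,\lambda_2)=r_2(\lambda_1,\lambda_2)$ using the explicit formulas \eqref{77tf} for type~I and \eqref{77tfu} for type~II. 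Because the $r_i$ are homogeneous of degree~$0$ in $(\lambda_1,\lambda_2)$, this equation depends only on the ratio $s=\lambda_1/\lambda_2$, and after clearing denominators it becomes a polynomial equation in $s$.

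The second step is to show that this polynomial has exactly two positive roots (positivity is required since $\Lambda$ must be positive definite, so $\lambda_1,\lambda_2>0$). For type~I, substituting into \eqref{77tf} and multiplying through by $2(2n-1)\lambda_1\lambda_2^2$ yields a cubic (or low-degree) polynomial in $s$; one checks via Descartes' rule of signs, or by exhibiting the sign changes of the function $s\mapsto r_1-r_2$ on $(0,\infty)$ together with its behavior as $s\to0^+$ and $s\to\infty$, that there are precisely two admissible roots. The same computation for type~II using \eqref{77tfu} gives an analogous count. In fact the invariant lines already found in Lemma~\ref{invlinet1} (and its type~II analogue) are suggestive here: the rays $\gamma_1,\gamma_2$ through the origin are exactly the loci where the direction of the flow is radial, hence candidates for the homothetic (Einstein) solutions — so one expects the two Einstein ratios to be $s=2(m-1)/(m+2k)$ and $s=2$ in Case~I, and the reciprocals of the slopes of $\gamma_1,\gamma_2$ in Case~II. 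One should verify directly that each of these two rays does satisfy $r_1=r_2$, which pins down the two metrics explicitly.

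The third step is the geometric dichotomy: one of the two Einstein metrics is K\"ahler and the other is not. For this I would invoke the bracket relations of Lemma~\ref{lemacolchete}: on a flag manifold with two isotropy summands there is (up to scale and conjugation) a unique invariant complex structure, and an invariant metric $\Lambda=(\lambda_1,\lambda_2)$ is K\"ahler with respect to it iff the ratio $\lambda_1/\lambda_2$ takes one specific value determined by the Koszul form / the restriction of the canonical symplectic form — concretely, K\"ahler forces $\lambda_1 = 2\lambda_2$ type relations coming from $[\gotico{m}_1,\gotico{m}_1]\subset\gotico{m}_2\oplus\gotico{h}$. Matching that distinguished ratio against the two Einstein ratios found above shows exactly one of them is the K\"ahler--Einstein metric (its existence being guaranteed, e.g., by Matsushima/Koszul since $G/H$ is a flag manifold), while the other Einstein ratio, being different, gives a non-K\"ahler Einstein metric.

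The main obstacle I anticipate is the bookkeeping in step~three: identifying the precise ratio $\lambda_1/\lambda_2$ that characterizes the K\"ahler condition in terms of the structure constants, and then confirming algebraically that it coincides with one — and only one — of the two positive roots of the Einstein equation, uniformly in the parameters $m,n,k$ (with the stated restrictions $m>1$, $k\neq1$ in Case~I and $m\geq1$, $k\geq3$ in Case~II). The root-counting in step~two is routine polynomial algebra, and the reduction in step~one is immediate from homogeneity; everything else is a matter of matching the dynamical-systems picture of Section~4 (two radial invariant rays) with the differential-geometric classification, so the cross-checks should close cleanly.
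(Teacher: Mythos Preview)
The paper does not prove this theorem: it is quoted from \cite{arva3} and \cite{neiton} and then used as input for Theorem~\ref{tfinal}. So there is no paper proof to compare against; your proposal is an independent attempt.

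Your overall strategy---identify Einstein metrics with the radial invariant rays of the Ricci flow, count them, and single out the K\"ahler one via the bracket relations---is sound, and indeed the invariant rays of Lemma~\ref{invlinet1} are precisely the Einstein loci. But there is a real slip in step one. With the paper's convention the $r_i$ are the components of the Ricci tensor with respect to $Q$ (this is what makes the flow read $\dot\lambda_i=-2r_i$), so the Einstein condition $\mathrm{Ric}=c\,g$ is
\[
\frac{r_1}{\lambda_1}=\frac{r_2}{\lambda_2},
\]
not $r_1=r_2$. Your own ``homothetic shrinking'' description says exactly this: $\dot\lambda_1/\lambda_1=\dot\lambda_2/\lambda_2$, which is the invariant-ray equation. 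If you actually solve $r_1=r_2$ for type~I you get the quadratic $(1+2k)s^{2}+(m-1)s-2(1+2k)=0$, which has a \emph{single} positive root and it is neither $s=2$ nor $s=2(m-1)/(m+2k)$; so the mis-stated condition derails step two. Once you replace it by $r_1\lambda_2=r_2\lambda_1$, the resulting equation in $s=\lambda_1/\lambda_2$ has exactly the two positive roots predicted by Lemma~\ref{invlinet1}, and your step three can proceed.

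One further scope issue: the theorem as stated covers \emph{all} generalized flag manifolds with two isotropy summands, while the formulas \eqref{77tf}, \eqref{77tfu} and the invariant-ray count treat only types~I and~II. To get the full statement along your lines you would need the general Ricci-component formula from \cite{arva2} together with the classification in \cite{arva3}, case by case; that is essentially how \cite{arva3} and \cite{neiton} proceed.
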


We summarize Theorems \ref{mth1}, \ref{t2} and the discussion of this section in the following result.

\begin{thm}\label{tfinal}With the notation of Theorems \ref{mth1} and \ref{t2}, let $g_0$ be a invariant metric in the manifolds of type I or II. Remember $R_1,R_2,R_3,\gamma_1,\gamma_2$ in Figure \ref{fig:plotTipo1}. Then the following holds:\\
\noindent (a) If $g_0\in R_1\cup R_2\cup \gamma_1$, then $g_\infty$ is a Einstein (non-K\"ahler) metric.\\
\noindent (b) If $g_0\in\gamma_2$ then $g_\infty$ is a K\"ahler-Einstein metric.\\
\noindent (c) For $g_0\in R_3$, consider the natural fibration from a flag manifold in a symmetric space $G/H\rightarrow G/K$. Then the Ricci flow $g(t)$ with $g(0)=g_0$ evolve in such a way that the diameter of the base of this fibration converges to zero when $t\rightarrow \infty$. 
\end{thm}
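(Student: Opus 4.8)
The plan is to read Theorem \ref{tfinal} as a synthesis of the dynamical statements in Theorems \ref{mth1} and \ref{t2}, of the classification of invariant Einstein metrics quoted above, and of the description of the fibration $K/H\to G/H\to G/K$ recalled earlier in this section; the only genuinely new content is the dictionary between ``$\varphi_t(g_0)\to p_i$'' and the geometric assertions about $g_\infty$. So I would first fix the meaning of $g_\infty$: since the Ricci flow on these manifolds is immortal (in system \eqref{tipoA} one has $\dot x>0$, so the trajectory escapes to infinity), $g_\infty$ is to be understood as the limit, in the Poincar\'e disc, of the ray of $\Lambda(t)=(\lambda_1(t),\lambda_2(t))$, i.e.\ of the evolving metric taken up to scale. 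One also notes that, up to the reparametrization by $\rho=y^2$ used to make the system polynomial, $\varphi_t$ traces the Ricci flow orbits in the first quadrant, so its $\omega$-limits are those of the Ricci flow; with this, $\varphi_t(x_0,y_0)\to p_i$ says precisely that the ray of $g(t)$ converges to the ray determined by $p_i$.

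For part (a): by Theorem \ref{mth1} (resp.\ Theorem \ref{t2}), every $g_0\in R_1\cup R_2\cup\gamma_1$ satisfies $\varphi_t(g_0)\to p_1$, and by the lemma before Figure \ref{fig:plotTipo1} the point $p_1$ is exactly the point at infinity of the invariant curve $f_+\circ\gamma_1$; hence the ray of $g(t)$ converges to $\gamma_1$. Now $\gamma_1$ is an invariant line of the Ricci flow vector field (Lemma \ref{invlinet1}), and on a ray through the origin that vector field is tangent exactly when $Ric(g)\parallel g$, i.e.\ when $g$ is Einstein; hence every metric on $\gamma_1$ is Einstein, and likewise for $\gamma_2$. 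The lines $\gamma_1$ and $\gamma_2$ are distinct (immediate from Lemma \ref{invlinet1} and its Case II analogue), so they realize the two invariant Einstein metrics of the classification theorem above, one K\"ahler and one not. It remains to decide which: a direct check of the K\"ahler condition, i.e.\ closedness of the invariant $2$-form $\omega$, using the bracket relations of Lemma \ref{lemacolchete} (or the explicit computation in \cite{arva3,neiton}), shows that the ray of invariant K\"ahler metrics is $\gamma_2$. Therefore $\gamma_1$ carries the non-K\"ahler Einstein metric, so $g_\infty$ is that metric. Part (b) is then immediate: if $g_0\in\gamma_2$, then $g(t)$ stays on the invariant ray $\gamma_2$ for all $t$, so it is a positive rescaling of the K\"ahler-Einstein metric $g_0$ at every time, and $\varphi_t(g_0)\to p_2$; hence $g_\infty$ is K\"ahler-Einstein.

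For part (c): by Theorem \ref{mth1} (resp.\ Theorem \ref{t2}), $g_0\in R_3$ gives $\varphi_t(g_0)\to p_3$. Recall from the Case I and Case II discussion above that $G/H$ fibers over the symmetric space $G/K$, that $\gotico{m}_1$ is tangent to the base, and that the induced base metric at time $t$ is $\lambda_1(t)$ times the canonical metric of the isotropically irreducible space $G/K$; consequently $\mathrm{diam}(G/K,g(t)|_{\mathrm{base}})=\sqrt{\lambda_1(t)}\cdot\mathrm{diam}(G/K,Q|_{\gotico{m}_1})$. As already observed in that discussion, the trajectory converging to $p_3$ in Figure \ref{fig:plotTipo1} has $\lambda_1(t)\to 0$, so the diameter of the base converges to $0$, which is precisely assertion (c).

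The step I expect to be the main obstacle is the one point of part (a) that is not dynamical: deciding which of the two invariant rays $\gamma_1,\gamma_2$ carries the K\"ahler metric (the $d\omega=0$ verification above, or an appeal to \cite{arva3} and \cite{neiton}); alongside it, one should make the normalization behind ``$g_\infty$'' precise, so that convergence in the Poincar\'e disc really amounts to convergence of metrics up to scale. Everything else reduces to the phase portraits of Theorems \ref{mth1} and \ref{t2} and to the geometric set-up already recalled in this section.
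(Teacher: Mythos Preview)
Your proposal is correct and follows essentially the same route as the paper: the paper offers no separate proof of Theorem~\ref{tfinal} at all, presenting it explicitly as a summary of Theorems~\ref{mth1} and~\ref{t2}, the Einstein classification theorem, and the fibration discussion in Section~5. Your write-up is a more careful and explicit version of that same synthesis---you add the justification that invariant rays are exactly the Einstein rays, make precise the meaning of $g_\infty$ as a limit up to scale in the Poincar\'e disc, and correctly isolate the identification of $\gamma_2$ as the K\"ahler ray as the one step requiring an outside input (the paper handles this only via the remark citing \cite{arva3} and \cite{neiton}).
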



\begin{rem}The Einstein metrics described in Theorem \ref{tfinal} are the same obtained in \cite{arva3} and \cite{neiton}.
\end{rem}

\section*{Acknowledgments}

The authors would like to thank Prof. Caio Negreiros for the useful comments.

\end{document}